\definecolor{light}{gray}{0.9}
\definecolor{medium}{gray}{0.8}
\newtheorem{theorem}{Theorem}
\newtheorem{lemma}[theorem]{Lemma}
\newtheorem{proposition}[theorem]{Proposition}
\newtheorem{conjecture}[theorem]{Conjecture}
\theoremstyle{definition}
\newtheorem{remark}[theorem]{Remark}
\newtheorem{definition}[theorem]{Definition}
\numberwithin{equation}{section}
\def\NN{{\mathbb N}}
\def\11{{\mathbb 1}}
\def\cR{{\mathcal R}}
\def\cB{{\mathcal B}}
\def\cF{{\mathcal F}}
\def\cM{{\mathcal M}}
\def\Rees{\operatorname{\cR}}
\def\ttt#1{\texttt{#1}}
\let\epsilon=\varepsilon
\def\strut{\vphantom{\Large(} }
\def\tat{t\^ete-a-t\^ete}
\def\init{\operatorname{init}}
\def\ini{\operatorname{in}}
\def\Ker{\operatorname{Ker}}
\def\supp{\operatorname{supp}}
\def\Sagbi{{\textsc{Sagbi}}}
\begin{document}
	

\title{\Sagbi {} combinatorics of maximal minors and a \Sagbi {} algorithm}

\author{Winfried Bruns}
\address{Universit\"at Osnabr\"uck, Institut f\"ur Mathematik, 49069 Osnabr\"uck, Germany}
\email{wbruns@uos.de}
\author{Aldo Conca}
\address{Dipartimento di Matematica, Universit\'a di Genova, Italy}
\email{conca@dima.unige.it}

\subjclass[2010]{13F50, 13F65, 13P10, 13P99, 14M25}
\keywords{Grasmannian, maximal minors, \Sagbi {} basis, Singular, Normaliz}
\thanks{AC is supported by PRIN 2020355B8Y  and by INdAM-GNSAGA} 

\begin{abstract}
The maximal minors of a matrix of indeterminates are a universal
Gr\"obner basis by a theorem of Bernstein, Sturmfels and Zelevinsky. On the
other hand it is known that they are not always a universal \Sagbi {} basis. By an experimental
approach we discuss their behavior under varying monomial orders and their extensions to
\Sagbi {} bases. These experiments motivated a new implementation of the \Sagbi {} algorithm
which is organized in a Singular script and falls back on Normaliz for the combinatorial
computations. In comparison to packages in the current standard distributions of Macaulay 2, version 1.21, and 
Singular, version 4.2.1 and a package intended for CoCoA 5.4.2, it extends the range of computability by at least one order of magnitude.
\end{abstract}
\maketitle

\section{Introduction}

Let $X$ be an $m\times n$ matrix of indeterminates over a field $K$. The minors of $X$, i.e., the determinants of the submatrices of $X$ generate subalgebras and ideals of the polynomial ring $R=K[X_{ij}: i=1,\dots,m\ j=1\dots,n]$ that are important not only in algebraic geometry and commutative algebra, but also in representation theory and combinatorics. For a recent survey of the manifold approaches to the theory of ideals and subalgebras generated by minors we refer the reader to our recent volume \cite{BCRV} with Raicu and Varbaro. One of these approaches is via Gr\"obner and \Sagbi {} bases. We trust that the reader is familiar with the notion of Gr\"obner basis. A \Sagbi {} basis (terminology of Robbiano and Sweedler  \cite{RobSwe}) of a subalgebra $A$ for a given monomial (or term) order on the ambient polynomial ring is a system of generators $\cF$  such that the set $\ini(\cF)$ of initial monomials generates the initial algebra $\ini(A)$.

Let us assume $m\le n$. By a theorem of Sturmfels \cite{StuGrDet}, the $t$-minors, i.e., the determinants of the $t\times t$-submatrices, are a Gr\"obner basis of the ideal they generate under any diagonal monomial order---a diagonal monomial order chooses the product of the diagonal elements as the initial monomial of a $t$-minor. For the maximal minors, for which $t=m$, much more is true: They are a universal Gr\"obner basis, namely a Gr\"obner basis for an arbitrary monomial order. See Conca, De Negri and Gorla \cite{CDG} for a simple proof of this theorem of Bernstein, Sturmfels and Zelevinsky \cite{BerZel, StuZel}. 

The  subalgebra $G(m,n)$ generated by the maximal minors is the homogeneous coordinate ring of the Grassmann variety of $m$-dimensional subspaces of $K^n$. For general $t$ the $t$-minors are not a \Sagbi {} basis of the algebra they generate. But for maximal minors this is true for diagonal orders, and the toric deformation of $G(m,n)$ that it offers gives comfortable access to many homological and enumerative invariants of $G(m,n)$.  Other term orders
  for which maximal minors are a \Sagbi {} bases of $G(m,n)$ have been recently discovered, see for example  \cite{CM} and \cite{HO}.  

In view of the Bernstein--Sturmfels--Zelevinsky theorem the question of universality arises also here. But it was disproved by Speyer and Sturmfels \cite{SpeyStu}: for a $3\times 6$ matrix there exists a lexicographical order under which the $3$-minors are not a \Sagbi {} basis of $G(3,6)$.  

The starting point of the project on which we report in this paper was the question of what can be said about reverse lexicographical orders in this respect. As we will see, revlex universality also fails, but needs a $3\times 8$ matrix for failure. In view of the special properties of normal monomial algebras it is also interesting to ask whether initial algebras of $G(m,n)$ are always normal. This is surprisingly often true, and finding a counterexample needs more patience. By our experimental methods whose results are reported in Section \ref{Grass}, we cannot answer the question whether there is a finite universal  \Sagbi {} basis of $G(m,n)$. But our findings support the conjecture that this is true.

Our investigation of \Sagbi {} bases of Grassmannians motivated the development of an algorithm for the computation of \Sagbi {} bases in general. It is almost impossible to devise a  really new algorithm, but the efficiency of the implementation is extremely important for this complicated task. Our algorithm is implemented in the Singular \cite{Sing} script sagbiNormaliz.lib. While Singular is used for the polynomial arithmetic, newly developed functions of Normaliz \cite{Nmz} are called for the combinatorial tasks. The variants of our algorithms are explained in Section \ref{Impl}. One of them is based on control by Hilbert series.

In the last section we compare our algorithm to packages that come with the standard distributions of Macaulay2\cite{M2}, version 1.21, and Singular \cite{Sing}, version 4.2.1. We also compare it with a preliminary version of a CoCoA5 \cite{CoCoA} package which will be in the standard distribution from version 5.4.2 on. Applications to different types of subalgebras show that our implementation extends the range of computability by more than one order of magnitude. 
 
Normaliz 3.10.0 together with the Singular library sagbiNormaliz.lib and a new version of normaliz.lib , has been published in January 2023. On request we will provide the software and all data of this project to interested readers.

\section{Basics of \Sagbi {} bases}\label{basics}

The reader finds a compact discussion of \Sagbi {} bases in \cite[Sect. 1.3]{BCRV}. We use the notation developed there. Kreuzer and Robbiano \cite[Sect. 6.6]{KrRo} give a more extensive introduction; also see Ene and Herzog \cite{EneHerz} and Sturmfels \cite{StuGrPol}. \Sagbi {} bases were introduced independently by Robbiano and Sweedler \cite{RobSwe} and Kapur and Madlener \cite{KapMad}. The acronym \Sagbi {} stands for ``subalgebra analog to Gr\"obner bases of ideals'' \cite{RobSwe}.     Some authors have adopted recently a new terminology, Khovanskii bases, for a notion that generalize that \Sagbi {} bases but  in this paper we keep the traditional name.

Let $A \subset R=K[X_1,\dots,X_n]$ be a $K$-subalgebra and $\cF$ a (not necessarily finite) family of polynomials belonging to $A$. We assume that $R$ is endowed with a monomial (or term) order $<$. In the following we will often simply speak of an \emph{order}. One calls $\cF$ a \emph{\Sagbi {} basis} of $A$ if the initial monomials $\ini(f)$, $f\in \cF$, generate the initial algebra $\ini(A)$. A \Sagbi {} basis is automatically a system of generators of $A$. If $\cF$ is finite, then $A$ and $K[\ini(\cF)]$ are connected by a flat deformation (Conca, Herzog and Valla \cite{CHV}), and this allows the transfer homological and enumerative data from the toric algebra $K[\ini(\cF)]$ to $A$. Chapter 6 of \cite{BCRV} exploits this approach for the investigation of algebras generated by minors. 

\Sagbi {} bases need not be finite, but can always be chosen countable. Therefore one must allow that $\cF= (f_u)_{u\in N}$ with $N=\{1,\dots,p\}$ or $N=\NN$. We will always assume that the members of $\cF$ are monic. This is evidently no essential restriction of generality as long as the base ring $K$ is a field.

For us the following simple lemma is an important tool in the computation of \Sagbi {} bases. For a $\NN$-graded vector $K$-vector space $V$ one defines the \emph{Hilbert function} of $V$ by
\begin{equation}
H(V,k) = \dim_K V_k, \qquad k\in\NN,\label{Hilb}
\end{equation}
where $V_k$ is the subspace of degree $k$ elements of $V$.

\begin{lemma}\label{HilbLemma}
Let $R$ be a polynomial ring, endowed with a monomial order, and $A$ a finitely generated graded subalgebra. Furthermore let $\cF$ be a family of polynomials in $A$ and $B=K[\ini(\cF)]$ the subalgebra generated by the monomials $\ini(f)$, $f\in \cF$.Then the following hold:
\begin{enumerate}
\item $H(B,k) \le H(A,k)$ for all $k$.
\item $\cF$ is a \Sagbi {} basis of $A$ if and only if $H(B,k) = H(A,k)$ for all $k\in\NN$.
\end{enumerate}
\end{lemma}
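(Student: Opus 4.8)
The plan is to reduce both statements to a single fundamental fact: for any finitely generated graded subalgebra $A$ one has $H(\ini(A),k)=H(A,k)$ for every $k$, where $\ini(A)$ denotes the initial algebra, i.e.\ the $K$-span of all initial monomials of elements of $A$. Once this is in hand, part~(1) is immediate, since $B=K[\ini(\cF)]$ is contained in $\ini(A)$ and therefore $H(B,k)\le H(\ini(A),k)=H(A,k)$; and part~(2) reduces to the observation that $\cF$ being a \Sagbi{} basis means precisely $B=\ini(A)$, which, given the inclusion $B\subseteq\ini(A)$ of graded spaces with finite-dimensional components, is equivalent to degreewise equality of dimensions.

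First I would check that $\ini(A)$ is a graded subalgebra of $R$ whose degree-$k$ component is the $K$-span of $\{\ini(g):0\neq g\in A_k\}$. The key point is that for a homogeneous element all terms share the same degree, so $\ini(g)$ has degree $\deg g$. For a general $0\neq f\in A$ whose initial monomial $\ini(f)$ happens to have degree $k$, writing $f$ as the sum of its homogeneous components $f_{[d]}\in A_d$ shows that $\ini(f)$, being the overall largest monomial and of degree $k$, is already the largest monomial of $f_{[k]}$; hence $\ini(f)=\ini(f_{[k]})$ with $f_{[k]}\in A_k$. This pins down the degree-$k$ part of $\ini(A)$.

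Next comes the core step, $\dim_K\ini(A)_k=\dim_K A_k$, which I would prove by the standard echelon argument. Starting from any $K$-basis of the finite-dimensional space $A_k$, Gaussian elimination with respect to the monomial order yields a basis $g_1,\dots,g_d$ whose initial monomials satisfy $\ini(g_1)>\dots>\ini(g_d)$ and are in particular pairwise distinct. These $d$ distinct monomials lie in $\ini(A)_k$, and they span it: for any $0\neq g=\sum_i c_i g_i\in A_k$ the initial monomial of $g$ equals $\ini(g_{i_0})$ for the least index $i_0$ with $c_{i_0}\neq 0$. Thus $\ini(g_1),\dots,\ini(g_d)$ form a $K$-basis of $\ini(A)_k$, giving $\dim_K\ini(A)_k=d=\dim_K A_k$.

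Finally I would assemble the pieces. The inclusion $B\subseteq\ini(A)$ holds because each generator $\ini(f)$ with $f\in\cF\subseteq A$ lies in the subalgebra $\ini(A)$. For~(2), the forward implication is immediate from $B=\ini(A)$, while for the converse, equality $H(B,k)=H(A,k)=H(\ini(A),k)$ forces $B_k=\ini(A)_k$ in each degree, being finite-dimensional subspaces of equal dimension, whence $B=\ini(A)$. The only delicate point is the degree-preservation property of initial monomials, but since we work with homogeneous components this causes no trouble; everything else is formal linear algebra.
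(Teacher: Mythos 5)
Your proof is correct and takes essentially the same route as the paper: both arguments rest on the inclusion $B_k \subseteq \ini(A)_k$ together with the degreewise equality $H(A,k)=H(\ini(A),k)$, which the paper simply cites from \cite[1.4.3]{BCRV} while you prove it directly by the standard Gaussian-elimination (echelon basis) argument, and your reduction of $\ini(f)$ to $\ini(f_{[k]})$ correctly fills in the grading detail the paper leaves implicit.
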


\begin{proof}
For a graded subspace $V$ of the polynomial ring one has $H(V,k)=H(\ini(V)),k)$ for all $k$ \cite[1.4.3]{BCRV}. Furthermore there are inclusions
$$
B_k \subset \ini(A)_k,\qquad k\in\NN,
$$
and equality holds if and only if $H(B_k,k)=H(\ini(A)_k),k)$ for all $k$. Together with Equation \eqref{Hilb} this proves the lemma.
\end{proof}

To present $A$ as a residue class ring of a polynomial ring, we choose $P=K[Y_u: u\in N]$ and define a surjection
\begin{equation}
\phi: P \to A, \qquad \phi(Y_u) = f_u, \ u\in N.\label{phi}
\end{equation}
The $K$-algebra  $K[\ini(\cF)]$  is a homomorphic image of $P$, as well, namely by the surjection
$$
\psi: P     \to K[\ini(\cF)], \qquad \psi(Y_u) = \ini(f_u), \ u\in N.
$$
The kernel of $\psi$ is generated by a set of binomials. In the terminology of \cite{RobSwe} a binomial in $\Ker\psi$ is called a \emph{\tat}.

A monomial in $P$ is given by an exponent vector  $e=(e_u)_{u\in N}$ of natural numbers $e_u$ of which all but finitely many are $0$. We set $Y^{e} = \prod_{u\in N} Y_u^{e_u}$. Let $F\in P$ be a polynomial, given as a $K$-linear combination of monomials, $F=\sum_i a_iY^{e_i}$ where the $e_i$ are exponent vectors, and $a_i\neq 0$ for all indices involved. We set
\begin{align*}
\ini_\phi(F)&=\max_i \  \ini(\phi(Y^{e_i})), \\
\init_\phi(F)&=\sum_{\ini(\phi(y^{e_i}))=\ini_\phi(F) } a_iY^{e_i}.
\end{align*}
Note that in the definition of $\ini_\phi(F)$ the maximum is taken over the initial monomials with respect to the monomial order on $R$ so that it is a monomial in $R$. In contrast, $\init_\phi(F)$ is a polynomial in $P$. Since $\phi(\init_\phi(F))$ can be $0$, in general $\ini_\phi(F) \neq \ini(\phi(F))$, and this cancellation of initials is the crucial  aspect of \Sagbi {} computation.  One says that a polynomial $F\in\Ker\phi$ \emph{lifts} a polynomial $H\in \Ker\psi$ if $\init_\phi(F)=\init_\phi(H)$.

We can now formulate the \emph{\Sagbi {} criterion} (see \cite[1.3.14]{BCRV}).

\begin{theorem}
With the notation introduced, et $\cB$ be a set of binomials generating $\Ker\psi$. Then the following are equivalent:
\begin{enumerate}
	\item $\cF$ is a \Sagbi {} basis of $A$;
	\item every binomial $b\in \cB$ can be lifted to a polynomial $F\in \Ker\phi$.
\end{enumerate}
\end{theorem}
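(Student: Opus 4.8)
The plan is to prove both implications by the subduction (reduction) algorithm, built on two elementary observations. First, $\psi(Y^e)=\ini(\phi(Y^e))$ for every monomial $Y^e$, since the initial monomial of a product is the product of the initial monomials. Second, for any $F\in P$ one always has $\ini(\phi(F))\le\ini_\phi(F)$, and equality holds exactly when $\init_\phi(F)\notin\Ker\psi$: writing $\init_\phi(F)=\sum_j a_j Y^{e_j}$, each $\phi(Y^{e_j})$ is monic with initial monomial $\ini_\phi(F)$, so the coefficient of $\ini_\phi(F)$ in $\phi(F)$ is $\sum_j a_j$, and $\psi(\init_\phi(F))=\bigl(\sum_j a_j\bigr)\ini_\phi(F)$ vanishes iff $\sum_j a_j=0$. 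This is the precise algebraic form of the ``cancellation of initials''. I will use throughout that the monomial order on $R$ well-orders the monomials of $R$, so every strictly $\ini_\phi$-decreasing process terminates, and that $B\subseteq\ini(A)$ always holds, as recorded in the proof of Lemma \ref{HilbLemma}.

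For $(1)\Rightarrow(2)$ I would lift each $b\in\cB$ directly. Since $b=Y^e-Y^{e'}\in\Ker\psi$ has both monomials of equal $\ini\circ\phi$-value, $\init_\phi(b)=b$ and the initials cancel in $\phi(b)$, so $\ini(\phi(b))<\ini_\phi(b)$. If $\phi(b)=0$ we are done; otherwise $\ini(\phi(b))\in B$ by the \Sagbi {} hypothesis, hence equals $\ini(\phi(Y^d))$ for some monomial $Y^d$, and subtracting the appropriate scalar multiple of $Y^d$ produces $b_1$ with $\phi(b_1)$ of strictly smaller initial while $\init_\phi(b_1)=b$, the new term carrying a smaller $\ini_\phi$-value. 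Iterating, the initials $\ini(\phi(b_j))$ strictly decrease and the procedure stops at some $F=b_m$ with $\phi(F)=0$ and $\init_\phi(F)=b$; thus $F\in\Ker\phi$ lifts $b$.

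For $(2)\Rightarrow(1)$ I would show $\ini(a)\in B$ for every $a\in A$, which together with $B\subseteq\ini(A)$ gives $\ini(A)=B$. Pick $F$ with $\phi(F)=a$. If $\init_\phi(F)\notin\Ker\psi$ there is no cancellation and $\ini(a)=\ini_\phi(F)$, a product of the $\ini(f_u)$ and hence in $B$. If instead $\init_\phi(F)\in\Ker\psi$, every monomial of $\init_\phi(F)$ has the same $\psi$-value $\mu$, so $\init_\phi(F)$ is a $\psi$-homogeneous element of the binomial ideal $\Ker\psi$. Using that $\cB$ generates this ideal and that it is homogeneous for the grading $\deg Y^e=\psi(Y^e)$, I would write $\init_\phi(F)=\sum_k c_k Y^{h_k}b_k$ with $b_k\in\cB$ and every term $\psi$-homogeneous of value $\mu$. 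Replacing each $b_k$ by a lift $\tilde b_k\in\Ker\phi$ furnished by the hypothesis, set $F^{\ast}=F-\sum_k c_k Y^{h_k}\tilde b_k$. Then $\phi(F^{\ast})=\phi(F)=a$ because $\tilde b_k\in\Ker\phi$, while $\init_\phi\bigl(\sum_k c_k Y^{h_k}\tilde b_k\bigr)=\sum_k c_k Y^{h_k}b_k=\init_\phi(F)$, so the $\mu$-parts cancel and $\ini_\phi(F^{\ast})<\ini_\phi(F)$. By the well-order this reduction terminates in the cancellation-free case, yielding $\ini(a)\in B$.

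The main obstacle is the step in $(2)\Rightarrow(1)$ that converts the abstract membership $\init_\phi(F)\in\Ker\psi$ into a concrete, $\ini_\phi$-decreasing correction: one must know that a $\psi$-homogeneous element of $\Ker\psi$ admits a representation $\sum_k c_k Y^{h_k}b_k$ in which each individual term is again $\psi$-homogeneous of the same value $\mu$. This is exactly where the binomial (lattice-ideal) nature of $\Ker\psi$ and the compatibility of each lift $\tilde b_k$ with the $\ini_\phi$-filtration --- namely $\init_\phi(\tilde b_k)=b_k$ and $\ini_\phi(Y^{h_k}\tilde b_k)=\mu$ --- are indispensable, since without the homogeneity of the representation the correction terms could reintroduce monomials of $\ini_\phi$-value $\ge\mu$ and destroy the strict decrease.
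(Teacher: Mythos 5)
Your proof is correct: the subduction argument for (1)$\Rightarrow$(2) and, for (2)$\Rightarrow$(1), the rewriting of $\init_\phi(F)$ by a $\psi$-homogeneous representation $\sum_k c_k Y^{h_k}b_k$ followed by subtraction of the lifts and descent along the well-ordered monomials of $R$ are both sound, including the homogeneity step you single out (which works because $P$ is graded by $\deg Y^e=\psi(Y^e)$ in the free monoid of monomials of $R$, so homogeneous components can be extracted from any representation over the homogeneous generators $\cB$). The paper gives no proof of this theorem --- it cites \cite[1.3.14]{BCRV} --- and your argument is essentially the standard one found there, so the approaches coincide.
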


The Buchberger algorithm for a Gr\"obner basis of an ideal $I$ starts from a system of generators $G$ of $I$. Then one applies two steps, namely (i) the computation of the $S$-polynomials $S(g_1,g_2)$, $g_1, g_2\in G$, and (ii) their reductions modulo $G$. The nonzero reductions are then added to $G$, and the next round of $S$-polynomials of the augmented $G$ and their reductions is run. This produces an increasing sequence of initial ideals $\ini(G)$. Because of Noetherianity the process stops after finitely many rounds with a Gr\"obner basis of $I$. The reduction of an $S$-polynomial $S(g_1,g_2)$ to $0$ is equivalent to the liftability of the  ``divided Koszul syzygy'' of $\ini(g_1)$ and $\ini(g_2)$ to a syzygy of the polynomials $g\in G$.

The computation of \Sagbi {} bases follows the same pattern. There are however two main differences: an analog of the divided Koszul syzygies does not exist, and ascending chains of monomial subalgebras of $R$ need not stabilize. For \Sagbi {} bases one must therefore compute a binomial system of generators of $\Ker\Psi$, and one cannot expect the algorithm to stop. The analog of reduction is called subduction (we again follow \cite{RobSwe}) .

\begin{definition}
Let $g\in R$. Then $r\in R$ is a \emph{\index{subduction}subduction of $g$ modulo $\cF$} if there exist monomials $\cF^{e_1},\dots,\cF^{e_m}$ and non-zero coefficients $a_i\in K$ such that the following hold:
\begin{enumerate}
	\item $g=a_1\cF^{e_1}+\dots+a_m\cF^{e_m}+r$;
	\item $\ini(\cF^{e_i})\le \ini(g)$ for $i=1,\dots.m$;
	\item no monomial $\mu\in\supp(r)$ is of type $\ini(\cF^e)$.
\end{enumerate}
\end{definition}
The process that computes a subduction of $f$ modulo $\cF$ is also called subduction. Here $\supp(r)$ is the set of monomials of $R$ appearing in $r$ with a nonzero coefficient. In the computation of \Sagbi {} bases one can replace (3) by the weaker condition
\begin{enumerate}
\item[($3'$)]  $\ini(r)$ is not of type $\ini(\cF^e)$.
\end{enumerate}
There is an obvious algorithm that produces a \emph{subduction remainder} $r$ in $(3')$: if $\ini(f)=\ini(\cF^e)$,we   replace $f$ by $f - a\phi(\cF^e)$ where $a$ is the leading coefficient of $f$, and iterate this \emph{subduction step} as long as possible. The algorithm stops since the sequence of initial monomials is descending and descending sequences in a monomial order are finite. Once $(3')$ is reached, one applies subduction steps iteratively to the remaining monomials to achieve the `tail subduction'' asked for by (3). 

The algorithm (\emph{\Sagbi {}}) starts from the finite family $\cF_0$ generating the subalgebra $A\subset R$. Then one proceeds as follows:
\begin{enumerate}
\item[(1)] Set $i=0$.
\item[(2)] Set $\cF'=\emptyset$ and compute a binomial system of generators $\cB_i$ of the kernel of $\psi_i: P_i \to K[\cF_i]$, $P_i=K[Y_F:F\in \cF_i]$, $\psi_i(Y_F)=\ini(F)$.
\item[(3)] For all $\beta\in \cB_i$ compute the subduction $r$ of $\phi_i(\beta)$ modulo $\cF_i$, $\phi_i$ given by the substitution $Y_F\mapsto F$, $F\in\cF_i$.  If $r\neq 0$, make $r$ monic and add it  to $\cF'$.
\item[(4)] If $\cF'=\emptyset$, set $\cF_j=\cF_i$, $P_j=P_i$, $\cB_j=\cB_i$ for all $j\ge i$ and stop.
\item[(5)] Otherwise set $\cF_{i+1}=\cF_i\cup \cF'$, $i=i+1$ and go to (2).
\end{enumerate}
It is not hard to see that $\cF=\bigcup_{i=0}^\infty\cF_i$ is a \Sagbi {} basis of $A$, and that the algorithm stops after finitely many steps if $A$ has a finite \Sagbi {} basis.

\begin{remark}\label{goodies}
The computation of \Sagbi {} bases is in general a very complex operation. However, in some cases it can offer  a fast solution to problems that seem much simpler at first sight, but then turn out to be very hard. We discuss two cases.

(a) In our work preparing the article \cite{BCV} with Varbaro, we did not succeed to compute the defining ideal of the algebra $A$ generated by the $2\times 2$ minors of a $4\times 4$ matrix of indeterminates over a field $K$ in ``nonexceptional'' characteristics, for example, characteristic~$0$. Singular, CoCoA and Macaulay did not stop in computing the defining ideal in reasonable time, not even up to degree $4$.

From \cite{BCPow} it was however known that $A$ has a finite \Sagbi {} basis, and meanwhile more is known about it by work of Varbaro; see \cite[6.4.10]{BCRV}. In Section \ref{comp} we come back to the computation. It takes only 40 sec; see Table \ref{Bench_1}. With the right bookkeeping one can explicitly lift the final \tat{} to a defining ideal of the algebra. Since one wants the defining ideal in terms of the original system of generators, further processing is necessary and a minimization must follow. Nevertheless this is a feasible approach.

By the work of Huang et al.\  \cite{HPPRS} it is now known that the defining ideal for algebras of $2$-minors is generated in degree $2$ and $3$.

(b) Suppose that one wants to compute the Hilbert series of a Grassmannian explicitly by a computer algebra system. It is of course possible to use representation theoretic methods or classical approaches going back to Hodge \cite{Hodge}; see Braun \cite{Braun} for explicit formulas. But these need preparations, and the same is true if one wants to exploit the Pl\"ucker relations. Computing a Gr\"obner basis by elimination is therefore tempting, but already for rather small cases it takes surprisingly long--- already $3\times 9$ takes days. In contrast, the computation from a \Sagbi {} basis given by the generating minors in a diagonal monomial order is almost instantaneous. Also the explicit computation of the \Sagbi {} basis with respect to a diagonal monomial order is very fast. See Remark \ref{OnBench_1}(f) for computational data.
\end{remark}

\section{\Sagbi {} combinatorics of maximal minors}\label{Grass}

Let $K$ be a field and $X$ an $m\times n$ matrix of indeterminates with $m\le n$. By $\cM$ we denote the set of $m$-minors of $X$, i.e., the determinants of the submatrices
$$
(X_{iu_j}: i=1,\dots,m, \ j=1,\dots,m ), \qquad u_1<\dots<u_m.
$$
The subalgebra $G(m,n) = K[\cM]$ of $R = K[X_{ij}: i=1,\dots,m,\ j=1,\dots, n]$ is an object of classical algebraic geometry, namely the homogeneous coordinate ring of the Grassmannian of $m$-dimensional vector subspaces of $K^n$ in its Pl\"ucker embedding. A ``natural'' monomial order on $R$ is lexicographic (or degree reverse lexicographic) for the order
\begin{equation}
X_{11} > \dots > X_ {1n} > X_{21} >\dots > X_{2n} >  \dots > X_{m1} >\dots > X_{mn}. \label{diag_order}
\end{equation}

It is \emph{diagonal}:  the product of the indeterminates in the diagonal is the initial monomial of each minor in $\cM$. The standard bitableaux are a $K$-basis of $G(m,n)$ (see \cite[Chap 3]{BCRV}), and this implies that $\cM$ is a \Sagbi {} basis of $G(m,n)$ for every diagonal monomial order on $R$. To the best of our knowledge, this was first observed by Sturmfels \cite{StuGrPol}. This toric deformation gives a comfortable access to the cohomological and enumerative properties of $G(m,n)$. For example see \cite[Sect. 6.2]{BCRV}.

In view of Lemma \ref{HilbLemma} it is crucial for our experimental approach to compute the Hilbert series 
$$
H_{G(m,n)}(t) = \sum_{k=0}^{\infty} (\dim_K G(m,n)_k)t^k.
$$
Usually we work with the \emph{normalized} degree on $G(m,n)$ in which the $m$-minors have degree $1$. So $G(m,n)_k$ is the degree $km$ homogeneous component of $G(m,n)$ in the standard grading of $R$. Since the Hilbert series of $G(m,n)$ and its initial algebra coincide and the initial algebra is normal, the computation of the Hilbert series by Normaliz is almost instantaneous. 

It takes some work to show that $\cM$ is a Gr\"obner basis of the ideal $I_m = I_m(X)$ of maximal minors of $X$ with respect to a diagonal monomial order. But much more is true: by a theorem of Bernstein--Sturmfels--Zelevinsky \cite{BerZel,StuZel}, $\cM$ is a \emph{universal} Gr\"obner basis of $I_m$, a Gr\"obner basis with respect to every monomial order on $R$. See \cite{CDG} for the simple proof of Conca, De Negri and Gorla and  \cite{CDG1} for further developments. This raises the question whether $\cM$ is also a universal \Sagbi {} basis for $G(m,n)$. While this is true for $m=2$ \cite[5.3.6]{BCRV}, it fails already for $m=3$, $n=6$, as observed by Speyer and Sturmfels \cite{SpeyStu}: there exists a lexicographic order on $R$ for which $\cM$ is \emph{not} a \Sagbi {} basis of $G(m,n)$. But is $\cM$ \emph{universally revlex}, namely a \Sagbi {} basis for all degree reverse lexicographic orders on $R$?

Before we answer this question, let us outline two strategies for the investigation of it and related questions. The first strategy is very simple: after fixing the matrix format, choose an order of the indeterminates, extend it to a (reverse) lexicographic order and check whether $\cM$ is a \Sagbi {} basis for this order by comparing the Hilbert functions. (Instead of varying the order, we use a random permutation of the matrix entries.) This strategy is realized in a Singular script and the Hilbert series are computed by Normaliz. In the cases in which $\cM$ is not a \Sagbi {} basis, one can furthermore try to extend it to a full \Sagbi {} basis by the algorithm documented in Section \ref{Impl}.

While the random search suggests reasonable working hypotheses, it cannot prove statements about universality for which we must exhaust all relevant monomial orders. Even if one tries to use all the symmetries of $G(m,n)$, it is impossible to scan all (reverse) lexicographic orders. Instead we start from the \emph{matching fields} in the terminology of Sturmfels-Zelevinsky \cite{StuZel}: a matching field assigns each minor a monomial in its Laplace expansion. It is called \emph{coherent} if it always selects the initial monomial with respect to a monomial order. To reduce the number of matching fields, we use a result  of Sturmfels and Zelevinsky: given  a coherent matching field, there are exactly $m(m-1)$  indeterminates that do not appear in any monomial (we call them the  ``missing" indeterminates). Sturmfels and Zelevinsky also explain that the location of these missing indeterminates  satisfies certain restrictions, for example   in each row  one finds exactly  $(m-1)$ of them.  Hence the configuration of the missing indeterminates subdivides the set of coherent matching fields. For $m=3$ and $n\ge 6$, up to row and   column permutations,  there are exactly  $4$ types. See Figure \ref{types} in which the entries $0$ denote the missing indeterminates.
\begin{figure}[hbt]
\begin{align*}
1: \begin{bmatrix}
	&0&0&\dots\\
	0&&0&\dots\\
	0&0&&\dots
\end{bmatrix}&&&
2: \begin{bmatrix}
	&0&0&&\dots\\
	0&&&0&\dots\\
	0&0&&&\dots
\end{bmatrix}\\
3:\begin{bmatrix}
	&0&0&&&\dots\\
	0&&&0&&\dots\\
	0&&&&0&\dots
\end{bmatrix}&&&
4:\begin{bmatrix}
	&&&&0&0&\dots\\
	&&0&0&&&\dots\\
	0&0&&&&&\dots
\end{bmatrix}
\end{align*}
\caption{Types of matching fields for $m=3$}\label{types}
\end{figure}

We call a matching field \emph{lex (revlex) compatible} if there exists a (reverse) lexicographic order that produces the given matching field as the sequence of the initial monomials of $\cM$. The following observation saves computation time.

\begin{proposition}
Type 4 is not lex compatible.	
\end{proposition}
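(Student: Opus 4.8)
The plan is to use the fact that a lexicographic order is nothing but a total order on the variables $X_{ij}$, and that the initial monomial of a maximal minor is then computed by a greedy rule: among the entries of the corresponding $m\times m$ submatrix pick the largest variable, record the pairing of its row with its column, delete that row and column, and iterate. Indeed, any term of the minor that contains the largest entry of the submatrix is lex-larger than any term that does not, so the initial term must use the largest entry, and the description follows by induction on $m$. Thus ``Type 4 is lex compatible'' means exactly that, for a suitable total order on the $X_{ij}$, this greedy matching never lands on one of the six missing entries, for \emph{every} maximal minor.

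Next I would record the structural feature of Type 4 that drives the argument. After the relevant row and column permutation the six missing entries occupy three column-pairs (blocks) $\{1,2\}$, $\{3,4\}$, $\{5,6\}$, and within each block an \emph{entire} row is missing: rows $3$, $2$, $1$ respectively. In particular the three ``forbidden'' rows are pairwise distinct, forming a permutation of $\{1,2,3\}$. This is precisely what distinguishes Type 4 from Types 1--3, where the missing entries are not organized into blocks each killing a whole row, and for which a lex order does exist.

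The heart of the proof is then a single extremal choice. Assume a total order realizes Type 4 and let $w$ be the largest of the $18$ entries lying in the six columns carrying the missing pattern. For every maximal minor supported on these six columns whose column-set contains the column of $w$, the entry $w$ is the largest in the submatrix, so the greedy rule pairs the row of $w$ with its column first. If $w$ is one of the missing entries we are done immediately: greedy selects it, contradicting Type 4. Otherwise $w=X_{i_0 j_0}$ is present and sits in some block $\beta$; since the forbidden rows of the two blocks different from $\beta$ are exactly the two rows other than the forbidden row of $\beta$, precisely one of them, whose block I call $\delta$, has forbidden row $\neq i_0$. I would then look at the maximal minor on column $j_0$ together with the two columns of $\delta$: greedy first sets row $i_0\to$ column $j_0$, after which the two remaining rows must be matched to the two columns of $\delta$; but one of those rows is the forbidden row of $\delta$, whose entries in \emph{both} columns of $\delta$ are missing. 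Hence greedy is forced to select a missing entry---again a contradiction. As every possible position of $w$ is covered, no lex order can realize Type 4.

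The step I expect to be the real obstacle---and the reason a naive approach stalls---is the translation into clean order conditions. Comparing two candidate entries column by column does not work, because whether the greedy rule actually reaches a given entry depends on what has already been deleted, and the missing entries interfere with these local comparisons; trying to chain pairwise inequalities into a cycle runs straight into this context-dependence. The device that removes the difficulty is to argue only through the single globally largest entry $w$ among the six special columns, whose behaviour is unconditional. For $n>6$ nothing changes: one restricts attention to the six columns supporting the missing pattern, and all minors used above are genuine maximal minors of the $m\times n$ matrix.
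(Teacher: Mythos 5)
Your proof is correct and takes essentially the same route as the paper's: both arguments hinge on the single extremal observation that the lex-largest indeterminate must appear in the initial term of every minor whose submatrix contains it, and then exhibit a $3$-column minor in which both Laplace-expansion terms through that entry hit a missing position. Taking the maximum over the $18$ entries of the six special columns, rather than the global maximum as the paper does, is an inessential variation; your block-by-block case analysis merely fills in the details the paper's one-paragraph proof leaves to the reader.
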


\begin{proof}
Among the indeterminates one entry is largest with respect to the order, say $X_{uv}$. However, regardless of which it is, there always exist a minor involving $3$ columns such that both monomials in its Laplace expansion that are divisible by $X_{uv}$ are excluded since they both hit a $0$ in one of the rows different from row $u$.
\end{proof}

The types are scanned individually. Even after these preparations it would take too long to create all macing fields for a given type and then check  whether they are lex or revlex compatible, and if so, whether $\cM$ is a \Sagbi {} basis for such an order. Usually there are several such orders; but it depends only on the matching field whether $\cM$ is a \Sagbi {} basis. As soon as the matching field is fixed, this is only a question of whether the corresponding monomial algebra and $G(m,n)$ have the same Hilbert series.

Therefore we choose an incremental approach that is realized in a C++ program.  Let $\cM=\{\Delta_1,\dots, \Delta_N\}$, $N=\binom{n}{m}$. The matching fields under consideration are sequences of monomials $\delta_1,\dots,\delta_N$ such that $\delta_i$ appears in the Laplace expansion of $\Delta_i$ and avoids the indeterminates that are excluded by the chosen type. The notion of lex or revlex compatibility extends naturally to initial subsequences of a matching field. For a given initial subsequence $\gamma_1,\dots,\gamma_u$ let $\Gamma(\gamma_1,\dots,\gamma_u)$ be the set of all compatible matching fields that extend $\gamma_1,\dots,\gamma_u$ by $\gamma_{u+1},\dots,\gamma_N$. We want to compute $\Gamma(\emptyset)$ where $\emptyset$ stands for the empty initial subsequence. This is done via the recursive relation
$$
\Gamma(\gamma_1,\dots,\gamma_u) = \bigcup_{\gamma_{u+1}} \Gamma(\gamma_1,\dots,\gamma_u, \gamma_{u+1})
$$
where $\gamma_{u+1}$ satisfies the following conditions:
\begin{enumerate}
\item $\gamma_{u+1}$ appears in the Laplace expansion of $\Delta_{u+1}$,
\item $\gamma_{u+1}$ is not excluded by the given type, and
\item $\gamma_1,\dots,\gamma_u, \gamma_{u+1}$ is compatible.
\end{enumerate}
Less formally, we extend a compatible initial sequence $\gamma_1,\dots,\gamma_u$ in all possible ways. Condition (3) is the crucial test: if $\gamma_1,\dots,\gamma_u$ is not extensible by at least one $\gamma_{u+1}$, the recursion sops and we backtrack to $\gamma_1,\dots\gamma_{u-1}$ and try the next choice of $\gamma_u$. 

Let us state the results of the two experimental approaches:

\begin{theorem}\label{universal}
Let $m= 3$.
\begin{enumerate}
\item $\cM$ is a universal \Sagbi {} basis for $n\le5$, but not for $n\ge 6$.
\item $\cM$ is a universally revlex \Sagbi {} basis for $n\le 7$.
\item There exist a lexicographic order for $n=6$ and a reverse lexicographic order for $n=8$ such that $\cM$ is not a \Sagbi {} basis for them.
\end{enumerate}
\end{theorem}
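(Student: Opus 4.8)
The plan is to reduce every assertion to a finite list of Hilbert-series comparisons, and then to settle the universality claims by exhaustive enumeration and the failure claims by explicit witnesses. By Lemma~\ref{HilbLemma}, for a fixed order $\cM$ is a \Sagbi{} basis of $G(m,n)$ precisely when $K[\ini(\cM)]$ and $G(m,n)$ have the same Hilbert function. The target series $H_{G(m,n)}$ is computed once and for all from the diagonal initial algebra, which is normal, so Normaliz returns it at once. Crucially, $\ini(\cM)$ depends on the order only through the matching field it induces, so \Sagbi{}-ness is a property of the matching field alone; the verification therefore ranges over the finite set of coherent matching fields rather than over the infinitely many monomial orders.

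For the universality statements --- all orders with $n\le5$ in (1), and all reverse lexicographic orders with $n\le7$ in (2) --- I would enumerate the relevant matching fields and apply the Hilbert test to each. The Sturmfels--Zelevinsky structure theory \cite{StuZel} keeps this finite and organized: for $m=3$ and $n\ge6$ the configurations of missing indeterminates reduce, up to row and column permutations, to the four types of Figure~\ref{types}, and the preceding proposition already removes Type~4 in the lex case. Within each surviving type I would run the incremental backtracking built on the recursion $\Gamma(\gamma_1,\dots,\gamma_u)=\bigcup_{\gamma_{u+1}}\Gamma(\gamma_1,\dots,\gamma_u,\gamma_{u+1})$, extending a compatible initial subsequence of chosen monomials in all admissible ways and pruning the moment compatibility fails. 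Each matching field that reaches full length yields a monomial algebra $K[\ini(\cM)]$ whose Hilbert series is compared with $H_{G(m,n)}$; universality over the stated range is exactly the assertion that equality holds for every field produced. For the strong (all-orders) universality with $n\le5$, where one is not restricted to lex or revlex, the counts are small enough to enumerate all coherent matching fields directly, so the Hilbert test alone suffices.

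For the negative statements I would present explicit certificates. At $n=6$ the lexicographic failure is that of Speyer and Sturmfels \cite{SpeyStu}: I would display the offending matching field and verify through Lemma~\ref{HilbLemma} that $K[\ini(\cM)]$ drops strictly below $G(3,6)$ in the first degree where they differ. At $n=8$ the reverse lexicographic failure is the new contribution; from the enumeration (or from the random search) I would extract one revlex-compatible matching field whose monomial algebra has strictly smaller Hilbert series than $G(3,8)$ and record it as the witness. To upgrade the single bad order at $n=6$ to ``not universal for every $n\ge6$'' I would enlarge the matrix by columns whose indeterminates are placed lowest in the order: every term in the Laplace expansion of a minor meeting a new column uses one of its small indeterminates, so the minors supported on the original six columns keep their initial monomials, and the degree in which the Hilbert functions already disagree persists as a witness. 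This inheritance step is elementary but should be written out carefully.

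The main obstacle is neither the algebra nor the reduction but the combinatorial explosion: the number of matching fields grows so fast that naive enumeration is already hopeless at $n=7$. The decisive device is the aggressive pruning in the backtracking --- discarding an initial subsequence as soon as condition~(3) fails --- together with a cheap test of that condition, which is a feasibility question for the order constraints that a compatible lex or revlex order must satisfy. Bringing the $n=7$ revlex exhaustion and the $n=8$ search to completion comes down to keeping this compatibility test fast and the search tree as shallow as the type restrictions of Figure~\ref{types} allow.
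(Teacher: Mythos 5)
Your proposal matches the paper's method essentially step for step: the Hilbert-function criterion of Lemma~\ref{HilbLemma} with the target series computed from the normal diagonal initial algebra, the reduction of \Sagbi{}-ness to a property of the coherent matching field alone, the Sturmfels--Zelevinsky classification into the four types of Figure~\ref{types} with Type~4 discarded for lex, the pruned recursive backtracking over initial subsequences for the exhaustive ranges ($n\le 7$ revlex, small $n$ for all orders), and explicit witnesses (Speyer--Sturmfels at $n=6$, a found revlex matching field at $n=8$) for the negative claims. Your only genuine addition is the column-extension inheritance argument upgrading the $n=6$ failure to all $n\ge 6$, which the paper leaves implicit; as you note it needs care --- the clean version argues that under an order placing the new variables lowest, any factorization of $\ini(f)$ for $f\in G(3,6)$ inside $K[\ini(\cM(3,n))]$ can only involve initial monomials of minors supported on the original six columns, rather than a direct persistence of the Hilbert-function discrepancy in a fixed degree.
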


Unexpectedly often we have observed that both $K[\ini(M)]$ and the full initial algebra $\ini(K[\cM])$ are normal:

\begin{theorem}\label{normal}
Let $m=3$.
\begin{enumerate}
\item For lex orders $K[\ini(\cM)]$ is normal for $n\le 9$.
\item For revlex orders $K[\ini(\cM)]$ is normal for $n\le 8$.
\item For $n=9$ there exists a revlex order such that $\cM$ is a \Sagbi {} basis, but $\ini(K[\cM])$ is not normal.
\item For $n = 10$ there exists a lex order such $K[\ini(\cM)]$ is not normal.
\end{enumerate}	
\end{theorem}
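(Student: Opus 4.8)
The plan is to reduce every assertion to a finite collection of normality tests for affine monoid (toric) algebras and to carry these out with Normaliz, exactly as was done for the Hilbert-series comparisons behind Theorem \ref{universal}. The key structural observation is that $K[\ini(\cM)]$ is generated by monomials, so its normality is a purely combinatorial property of the affine semigroup $S$ spanned by the exponent vectors of the initial monomials $\ini(\Delta_1),\dots,\ini(\Delta_N)$: the algebra is normal if and only if $S=\cone(S)\cap\gp(S)$, which is precisely the computation Normaliz performs (compute the Hilbert basis of $\cone(S)\cap\gp(S)$ and test whether the original generators already suffice). Since the initial monomials are determined by the matching field, the normality of $K[\ini(\cM)]$ depends only on the matching field, not on the particular (reverse) lexicographic order realizing it.

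For the universal statements (1) and (2) I would enumerate all lex-compatible (respectively revlex-compatible) matching fields by the incremental backtracking procedure already described---starting from the four types of Figure \ref{types}, exploiting row and column symmetry, and pruning any partial sequence $\gamma_1,\dots,\gamma_u$ that fails the compatibility test (3). For each matching field produced I would hand the corresponding monomial generators to Normaliz and record whether $S$ is normal. The theorem is then the statement that every matching field in the range $n\le 9$ (lex) and $n\le 8$ (revlex) passes this test; confirming it amounts to verifying that no counterexample is emitted during the exhaustive scan.

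For the existence statements (3) and (4) the strategy is the complementary one: a random search over permutations of the matrix entries, realized in the Singular script, producing concrete (reverse) lexicographic orders whose initial algebras fail to be normal. For part (4) it suffices to exhibit one lex order at $n=10$ and let Normaliz certify that $K[\ini(\cM)]$ is not normal. Part (3) carries an extra requirement: one must first confirm, via the Hilbert-function equality of Lemma \ref{HilbLemma}, that $\cM$ is genuinely a \Sagbi {} basis for the chosen revlex order---so that $\ini(K[\cM])=K[\ini(\cM)]$---and only then certify non-normality of this common algebra. This is what makes (3) delicate: it asserts the coincidence of a \Sagbi {} basis with a non-normal initial algebra, not merely non-normality of the monomial subalgebra.

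The main obstacle is the combinatorial size of the exhaustive scan in (1) and (2). Even after reducing to the four types and quotienting by the row and column symmetries of $G(3,n)$, the number of compatible matching fields grows quickly with $n$, and only aggressive early pruning by the compatibility condition keeps the search within reach; the normality tests themselves are cheap for Normaliz once the generators are in hand. That the last normal cases are $n=9$ for lex and $n=8$ for revlex, with the counterexamples appearing one step beyond, shows the search is genuinely at the edge of feasibility, and the asymmetry between the two thresholds reflects the fact that lex and revlex orders realize different families of matching fields---consistent with Type 4 already being unavailable to lex orders.
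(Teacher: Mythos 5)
Your proposal matches the paper's method essentially exactly: reduce normality of $K[\ini(\cM)]$ to a Normaliz normality test on the affine monoid determined by the matching field, establish the universal statements (1) and (2) by the same exhaustive backtracking enumeration of lex/revlex-compatible matching fields, and obtain the counterexamples (3) and (4) by random search over entry permutations, with the extra Hilbert-function check of Lemma \ref{HilbLemma} confirming the \Sagbi{} property in case (3) so that $\ini(K[\cM])=K[\ini(\cM)]$. The authors proceed in precisely this two-pronged way, including the observation that the exhaustive scan is feasible only up to $n=9$ (lex) and $n=8$ (revlex) while the random search supplies the failures one step beyond.
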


As a concrete example let us give a matrix for Theorem \ref{normal}(3) where $[u\mid v]$ denotes $X_{uv}$:
$$
\begin{pmatrix}
[1\mid4]& [3\mid1]& [1\mid5]& [2\mid5]& [3\mid4]& [2\mid8]& [2\mid9]& [2\mid6]& [2\mid1]\\
[1\mid9]& [1\mid3]& [2\mid2]& [2\mid7]& [1\mid6]& [3\mid5]& [3\mid2]& [1\mid1]& [2\mid3]\\
[1\mid8]& [2\mid4]& [3\mid7]& [3\mid9]& [3\mid8]& [1\mid2]& [3\mid3]& [1\mid7]& [3\mid6)
\end{pmatrix}.
$$
The indeterminates are ordered as in \eqref{diag_order} and the monomial order is the degrevlex extension; as  mentioned, we have permuted the entries of the matrix instead of changing the order of the indeterminates.

Even our recursive method for finding compatible matching fields does not reach $n=10$. Already for $n=9$ it needs weeks of computation time, whereas it finishes in hours for $n=8$. So one can say that we were rescued by the random search that produced counterexamples exactly one step beyond the limit of computability.

The experimental evidence that we have collected, makes us optimistic for the following

\begin{conjecture}
$G(m,n)$ has a finite universal \Sagbi {} basis.
\end{conjecture}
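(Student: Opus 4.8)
The plan is to organize the infinitely many monomial orders on $R$ into finitely many combinatorial classes on which the relevant initial data is constant, to produce a finite \Sagbi {} basis for a representative of each class, and to take their union. The guiding analogy is the construction of a universal Gr\"obner basis from the finitely many maximal cones of a Gr\"obner fan, where the reduced Gr\"obner basis of a cone remains a Gr\"obner basis for every order in its closure. For \Sagbi {} bases the corresponding statements are more delicate, but the skeleton of the argument is the same.

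First I would pass from orders to weights. For computing any fixed finite portion of $\ini_<(G(m,n))$ only finitely many monomial comparisons are relevant, so it suffices to work with weight vectors $w\in\RR^{mn}$. Two pieces of finite combinatorial data are attached to such a $w$: the matching field $(\ini_w(\Delta_I))_I$, which is constant on the cones of a finite \emph{matching-field fan} (by \cite{StuZel} the coherent matching fields are finite and classified, for $m=3$ into the four types of Figure \ref{types}), and the induced weight $\tilde w=(w_I)_I$ on the Pl\"ucker coordinates, through which the toric relations among the initial minors are governed by the Gr\"obner fan of the Pl\"ucker ideal $J=\Ker(\phi)$, $\phi\colon P=K[p_I]\to R$, $\phi(p_I)=\Delta_I$. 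Pulling the Gr\"obner fan of $J$ back along the piecewise-linear map $w\mapsto\tilde w$ and intersecting with the matching-field fan yields a finite polyhedral fan $\Sigma$ on whose open cones both $\ini_w(\Delta_I)$ and the toric ideal $\Ker\psi_w$ of the algebra $K[\ini_w(\cM)]$ of initial minors are constant; see \cite{SpeyStu} for the tropical geometry of $G(m,n)$ that underlies this fan.

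The heart of the matter is to refine $\Sigma$ to a finite fan $\Sigma'$ on whose open maximal cones the \emph{full} initial algebra $\ini_w(G(m,n))$ is constant and \emph{finitely generated}. Constancy demands a uniform bound, as $w$ ranges over a cone, on the degrees in which new \Sagbi {} generators — the nonzero subduction remainders of non-lifting \tat{}s produced by the algorithm of Section \ref{Grass} — can arise; finite generation demands that this algorithm terminate for a representative weight, i.e.\ that $G(m,n)$ have a finite \Sagbi {} basis for that order. On the cones where $\cM$ is itself a \Sagbi {} basis one has $\ini_w(G(m,n))=K[\ini_w(\cM)]$, which is automatically finitely generated, and the theory of Khovanskii bases guarantees that a single representative weight controls the whole cone. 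The decisive empirical input is that in every case within reach the \Sagbi {} algorithm of Section \ref{Impl} does terminate, returning a finite \Sagbi {} basis; Theorem \ref{normal} shows moreover that the resulting toric degenerations are usually normal. Thus the required finite generation, cone by cone, is exactly what the computations make plausible.

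Granting this, the construction concludes as for universal Gr\"obner bases. For each maximal cone $C$ of $\Sigma'$ choose a finite \Sagbi {} basis $\cF_C$ by lifting a finite monomial generating set of $\ini_w(G(m,n))$ to elements of $G(m,n)$, and put $\cF=\bigcup_C\cF_C$, a finite family containing $\cM$. To verify that $\cF$ is a \Sagbi {} basis for an \emph{arbitrary} order $<$, observe that the toric ideal $\Ker\psi$ attached to $\cF$ is finitely generated, so by the \Sagbi {} criterion universality is a \emph{finite} liftability test; and since $<$ lies in the closure of some maximal cone $C$, one argues via a two-step degeneration — first by the interior weight $w$ of $C$, then by $<$ — that the finitely many \tat{}s of $\cF$ which lift for $w$ continue to lift for $<$, a step that, as for Gr\"obner bases, requires care on the boundary. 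The step I expect to resist proof is the one isolated above: the uniform degree bound together with termination of the \Sagbi {} algorithm on each cone, i.e.\ the finite generation of every initial algebra $\ini_<(G(m,n))$. This is the genuine mathematical content of the conjecture; the experiments can only corroborate it, and the fact noted after Theorem \ref{normal}, that counterexamples to universality and to normality surface exactly one step beyond the limit of computability, is at once the strongest encouragement and a warning that any such uniform bound will be delicate to establish.
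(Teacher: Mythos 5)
There is no proof of this statement in the paper to compare you against: it is stated as a \emph{conjecture}, supported only by the experimental evidence of Sections \ref{Grass} and \ref{Impl} (the authors expect, for instance, that the universal \Sagbi{} basis of $G(3,6)$ consists of the $3$-minors plus $15$ elements of normalized degree $2$, but they prove nothing beyond the finite case checks of Theorems \ref{universal} and \ref{normal}). So your attempt, to be a proof, would have to supply genuinely new mathematics, and by your own admission it does not: the step you isolate at the end is not a technical loose end but the entire content of the conjecture. Concretely, the gap is twofold, and each half is at least as strong as the statement being proved. First, finite generation of $\ini_<(G(m,n))$ for \emph{every} monomial order is open; nothing in the paper or in the cited literature gives it, and the paper's own data (new \Sagbi{} elements of degree up to $11$ already for $m=3$, computations infeasible at $n=9$) is evidence that no uniform degree bound is known. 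Second, your fan $\Sigma'$ does not exist by construction: the pullback of the Gr\"obner fan of the Pl\"ucker ideal along $w\mapsto\tilde w$, intersected with the matching-field fan of \cite{StuZel}, controls only the toric algebra $K[\ini_w(\cM)]$ and its ideal $\Ker\psi_w$. It says nothing about the full initial algebra $\ini_w(G(m,n))$ precisely in the cases of interest, namely when $\cM$ is not a \Sagbi{} basis, because the initial monomials of the subduction remainders of non-lifting \tat{}s depend on monomial comparisons in $R$ that are not among the finitely many comparisons fixed by $\Sigma$; demanding a finite refinement on which these are constant presupposes a uniform bound on the degrees of new generators, i.e., presupposes the conjecture. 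This is circular.

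Two further steps you wave through are also not theorems. The assertion that ``the theory of Khovanskii bases guarantees that a single representative weight controls the whole cone'' is false in the generality you need: unlike the Gr\"obner fan of an ideal, the region of weights realizing a fixed initial algebra of a subalgebra need not be finite in number, need not be polyhedral, and a finite \Sagbi{} basis for an interior weight need not remain one for other weights of the same cone of $\Sigma$ unless the initial algebra is constant there --- which is again the point at issue. Likewise the boundary argument via a two-step degeneration ($w$ first, then $<$) requires $\ini_<(\ini_w(G(m,n)))=\ini_{<_w}(G(m,n))$ together with finite generation on the boundary, and the identification of an arbitrary order with an interior weight of a maximal cone is exactly the finiteness one cannot assume. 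In short: your architecture is the natural one (it is the Gr\"obner-fan strategy, adapted), and it correctly identifies where the difficulty sits, but as a proof it reduces the conjecture to two unproved assertions that jointly imply it. The honest conclusion, which the paper itself draws, is that the statement remains open and is supported only by computation.
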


The conjecture is supported by overwhelming experimental evidence for $G(3,6)$; in fact, we expect that the universal \Sagbi {} basis has $15$ elements of (normalized) degree $2$ in addition to the $3$-minors.
We are confident that we can extend our experiments to checking the conjecture for $m = 3$ and $n\le 8$. It is already clear that $n=9$ is out of reach, not only because of the large number of cases, but also since the algorithm of Section \ref{Impl} must often give up if the degrees of the polynomials in the \Sagbi {} basis exceeds $7$, and we have seen cases in which degree $11$ is reached. Even if the combinatorial computations should be still doable, the complexity of the polynomial computations and the available memory set a limit.

\begin{remark}\label{SystRemark}
(a) While $\cM$ fails to be a \Sagbi {} basis much earlier and much more often for lex orders than for revlex ones, we have observed that the missing polynomials usually have considerably lower degree in the lex case. 

(b) Instead of lex and revlex orders one can experiment with arbitrary  orders. At least for $m= 3$ and $n\le 8$, all matching fields found for arbitrary orders are lex or revlex compatible.

(c) The algebra $G(m,n)$ is a retract of the Rees algebra $\Rees(I_m)$ by degree selection, and therefore the equality $K[\ini(\cM)] = \ini(K[\cM])$ is a necessary condition for $\Rees(I_m)=\ini(\Rees(I_m))$. Not surprisingly, it is not a sufficient condition, as many counterexamples demonstrate. Note that $\Rees(I_m)=\ini(\Rees(I_m))$ is equivalent to $\ini(I_m)^k= \ini(I_m^k)$ for all $k$. Even if this does not hold in general, it often starts to fail for unexpectedly large $k$.

As a concrete example consider the matrix
$$
\begin{pmatrix}
[2\mid 4]&[1\mid 5]&[3\mid 5]&[3\mid 6]&[1\mid 1]&[1\mid 6]&[2\mid 7]\\
[1\mid 4]&[2\mid 6]&[1\mid 7]&[3\mid 4]&[3\mid 3]&[2\mid 5]&[2\mid 3]\\
[1\mid 2]&[3\mid 7]&[3\mid 2]&[3\mid 1]&[1\mid 3]&[2\mid 1]&[2\mid 2]
\end{pmatrix}
$$
where we use the same notation and monomial order as in the example following Theorem \ref{normal}. Despite of $K[\ini(\cM)]=\ini(K[\cM])$ one has $\Rees(I_m)\neq\ini(\Rees(I_m))$, as the comparison of Hilbert series shows . Since the elements of $\cM$ have constant degree, $\Rees(I_m)$ and $\ini(\Rees(I_m))$ have a standard grading.  With CoCoA \cite{CoCoA} we have analyzed the binomial relations of $K[\ini(\cM)]$. With respect to the normalized  bigrading of the Rees algebra there are $245$ quadrics of bidegree $(1,1)$ and $(0,2)$. Those of bidegree $(1,1)$ can be lifted since the minors form a universal Gr\"obner basis. Those of bidegree $(0,2)$ have standard degree $2$, but the first degree in which the Hilbert series differ is $5$. So they are liftable as well. The obstruction to equality is a relation of bidegree $(1,4)$ and standard degree $5$. This implies that $\ini(I_3)^k=\ini(I_3^k)$ for $k\le 3$, but $\ini(I_3)^4\neq\ini(I_3^4)$.

The algorithm (Gen) of Section \ref{Impl} completes the \Sagbi {} basis by exactly one more degree $5$ element in a few seconds and confirms the analysis above: the additional element has  bidegree $(1,4)$. Consequences: (i) $\ini(I_3^4)$ and $\ini(I_3^4)$ differ in degree $13$ and (ii) $\ini(I_3^k)=\ini(I_3^4) \ini(I_3^{k-4})$ for $k\geq 4$.
\end{remark}

\section{An implementation of the \Sagbi {} algorithm}\label{Impl}

Our implementation is based on Singular \cite{Sing} and Normaliz \cite{Nmz}. We have realized three variants of the algorithm that we will explain below. They are organized in the Singular library sagbiNormaliz.lib, which in its turn connects to Normaliz for the combinatorial tasks via an extended version of the Singular library normaliz.lib. Both libraries will be published together with Normaliz version 3.10.0 (released in January  2023). 

The interface offered by normaliz.lib writes input files for Normaliz, calls it, and then reads the output files. The Macaulay2 \cite{M2}  interface normaliz.m2 \cite{BrKae} is file based as well. The transfer of the Singular implementation to Macaulay2, together with an update of normaliz.M2, is not hard for an experienced Macaulay2 user. The Normaliz team would be very grateful for help! A realization via the C++ class library libnormaliz would of course be preferable and simplify the implementation.

Independently of the \Sagbi {} computations, version 3.10.0 of Normaliz has been augmented by functions for arbitrary positive affine monoids. Such a monoid represents the combinatorial skeleton of a monomial algebra $M$, namely the monoid of the exponent vectors of the monomials of $M$. Whereas the functions are implemented additively on the combinatorial side, we describe their results in the multiplicative language of monomial algebras: 
\begin{enumerate}
\item  A minimal system of generators, which is uniquely determined. (See Bruns and Gubeladze \cite[Chap. 2]{BrGu} for the theory of affine monoids.). In fact, it is the set of \emph{irreducible} elements of $M$, i.e., those monomials $x$ that cannot be written as a product  of monomials $y,z\neq x$ . In addition, one can ask for the \emph{representations} of the reducible elements in a given system of generators as power products of the irreducible ones. 

\item A defining binomial ideal of $A$, given by a minimal system of generators. Such an ideal is often called \emph{toric} and the (minimal) system of generators is a (minimal) \emph{Markov basis}; see De Loera, Hemmecke and K\"oppe \cite{DLHK}. Contrary to (1), this is a time critical task. Normaliz has implemented the project-and-lift algorithm of Hemmecke and Malkin \cite{HemMal}. The Normaliz implementation is not inferior to Hemmecke and Malkin's 4ti2 \cite{4ti2}.

\item The Hilbert series of $M$, which is the ordinary generating function  of the enumeration of monomials by degree. It uses the classical commutation of Hilbert series via initial (monomial) ideals.
\end{enumerate}
One needs the defining binomial ideal for the \tat, as is clear by their definition. The irreducible elements must be known for a minimal \Sagbi {} basis and the control of the algorithm. The representations are necessary for subduction. One of our variants uses the Hilbert series, as we will explain below.

As far as a grading is involved, the algorithms assume the standard $\NN$-grading of the ambient polynomial ring $R$. The extension to other positive gradings would be possible without much effort, not yet however the extension to multi-gradings because the current version of Normaliz does not allow them. For a (monomial) subalgebra $A$ we work with the \emph{normalized degree}. It is obtained as follows: first one computes the greatest common divisor of the degrees of the polynomials in a generating set, and then divides the standard degree by this ``grading denominator''. This is compatible to Normaliz, which uses the normalized degree as well (unless one forbids the grading denominator).

Via the generating system $\cF = (f_u)_{u\in N}$ of the subalgebra $A$, the polynomial ring $P = K[X_u: u\in N]$ is graded as well when we set $\deg Y_u = \deg f_u$ for all $u\in N$. Under this grading all \tat{} are homogeneous.

All three variants proceed in ``rounds'' as described in the algorithm (\Sagbi {}), but are modified in two variants:
\begin{enumerate}
\item[(Gen)] The \emph{general} variant is essentially{\tiny {\tiny }} (\Sagbi {}). It stops when $\cF'$ in step (4) is empty. As this may never happen, the user can set a bound on the number of rounds. The general variant does not require a grading.

\item[(Deg)] After the computation of the \tat{} for a set $\cF_i$, the \emph{degree by degree} variant goes over the homogeneous components of the \tat{} and stops at degree $d$ as soon as at least one of the subduction remainders is nonzero and therefore at least one new element $f$ of the \Sagbi {} basis has been found. 

All subduction remainders then extend the \Sagbi {} basis to degree $d$, and the ``search degree'' can be raised to $d+1$. The stop criterion $\cF' = \emptyset$ is the same as for (Gen). It is reached in degree $d$ when no homogeneous component of the n\tat{} has a nonzero subduction remainder in degrees $>d$.

For (Deg) the user must set an upper bound for $d$. If it can be expected that the \Sagbi {} basis is finite, the upper bound should be chosen very large.

\item[(Hilb)] The \emph{Hilbert series controlled variant} refines (Deg). As input it not only needs a system of generators of the subalgebra $A$, but also the Hilbert series of $A$, given as a rational function by its numerator and denominator. 

Let $S$ be the \Sagbi {} basis of $A$ up to degree $d$. Then the Hilbert series of $K[\ini(S)]$ is computed. If it agrees with the Hilbert series of $A$, $S$ is the complete \Sagbi {} basis. Otherwise the ``critical degree'' is computed, i.e., the lowest degree in which the Hilbert functions of $K[\ini(S)]$ and $A$ differ, together with the difference.

This information not only tells us in which degree we must evaluate the \tat{} for $K[\ini(S)]$ to find the next elements of the \Sagbi {} basis, but also their number $m$. Therefore the subduction can stop as soon as $m$ nonzero pairwise different remainders have been reached. They must be irreducible since they are not divisible by monomials of smaller degree and do not divide each other.

In contrast to (Gen) and (Deg), (Hilb) offers a perfect error control, which was extremely helpful during the development of the library.
\end{enumerate}

(Gen) and (Deg) have been implemented in other packages (though not necessarily together) that will be named  in Section \ref{comp}, and only some details may vary. But we are not aware of an implementation of (Hilb).

All our variants return the (partial) \Sagbi {} basis computed and an additional integer that takes the value $0$, if the partial \Sagbi {} basis is incomplete, the value $1$ if completeness is unknown, and $2$ if the \Sagbi {} basis is complete.

\begin{remark}\label{details}
(a) Both the evaluation of the \tat{} and a subduction step can be realized by the homomorphism $\phi: P \to A$ of Equation \eqref{phi}. Both amount to mapping binomials from $P$ to $A$ via $\phi$. We use the Singular map functionality for it.

(b) For the subduction one has two choices: (i) to subduce the polynomials individually until the remainder has been found, or (ii) apply one subduction step to all polynomials simultaneously, and iterate this step as long as necessary. In our computations (ii) has proved to be the better choice.
\end{remark} 

\begin{remark}\label{choice}
The following comments will be illustrated by computational data in Section~\ref{comp}.

(a) If the given subalgebra $A$ is graded, it is always advisable to use (Deg) or even (Hilb). (Gen) should be reserved for nongraded subalgebras: a stop after a certain number of rounds gives much less information on the \Sagbi {} basis than the stop at a predefined degree. Even in graded cases in which the complete \Sagbi {} basis is computed, (Deg) is usually better.

Subduction remainders of homogeneous polynomials of degree $\le d$ with respect to a family $\cF$ of homogeneous  polynomials of degree $\le d$ remain untouched if $\cF$ is augmented by homogeneous polynomials of degree $>d$. In other words, the partial \Sagbi {} basis computed by (Deg) or (Hilb) are increasing with respect to inclusion.
 
(b) The choice between (Deg)  and (Hilb) is more difficult. (Hilb) knows exactly into which degree to look next and can often stop the subduction process much earlier than (Deg). On the other hand, the Hilbert series of $A$ must be known, and in each round the Hilbert series of a monomial algebra must be computed. 

The Hilbert series computation is very fast if the monomial algebra is normal since then the Normaliz algorithm based on triangulation and Stanley decomposition can be used. In the non-normal case it is a byproduct of the \tat{} computation that produces a Gr\"obner basis of the binomial ideal before shrinking it to a minimal system of generators, but the Hilbert series computation for the initial ideal of the \tat{} ideal does not come for free.

Another parameter is the unpredictable complexity of subduction, which in its turn depends crucially on the sparsity of the polynomials involved in it. So the decision between (Deg) and (Hilb) is a matter of the Hilbert series computation versus the subduction. 

As for (Deg) the reader must set a degree bound for (Hilb), but can ask for a final check when it is reached.

(c) In some cases the time of the \tat{} computation depends significantly on the order of the polynomials entering it (see Remark \ref{OnBench_1}(e)). Our explanation is that the Gr\"obner basis computation on which it is based depends on a monomial order and is \emph{not} invariant under the permutation of coordinates. However, we have no suggestion how to choose an order for which the \tat{} computation is especially fast. As a step in this direction, the user can ask for a sorting of the polynomials at the beginning of each round. In the implementation it is fixed to be ascending in the degrevlex monomial order on the ambient polynomial ring $R$. Note, that both (Deg) and (Hilb) generate the elements of the \Sagbi {} basis in ascending degree, but not necessarily in any more refined order.
\end{remark}

\section{Computational data}\label{comp}

In the following we give computation times for several examples and compare them to packages that are included in the computer algebra systems CoCoA5, Macaualy2 and Singular:
\begin{enumerate}
\item The computations with CoCoA 5 use a variation of the script developed by Anna Bigatti for \cite{BR} which is  available at  \cite{BB} and will be part of the   standard distribution of CoCoA from version 5.4.2. 

\item The Macaulay2 distribution, version 1.21 (December 2022)  contains the package SubalgebraBases.m2 by Burr et al., version 1.3. It realizes the variant (Deg) and allows a degree bound.

\item The Singular distribution, version 4.2.1 (May 2021) contains the library sagbi.lib by Hackfeld, Pfister and Levandovskyy. It offers only the variant (Gen) with an optional bound on the number of rounds.
\end{enumerate}
As far as we could complete the computations, all packages give the same results.

The original submission of this article and the preprint arXiv:2302.14345v1  were based on a pre-release version of Normaliz 3.10.0 and version 1.1 of SubalgebraBases.m2. For this revised version we have updated the computational data. The preprint \cite{Burr} on SubalgebraBases.m2 was not yet available at the time of our original submission.

We now list our test examples.  With the exception of (\ttt{HK}$_2$) and (\ttt{2x2$_2$}), the base field has characteristic $0$.
\begin{itemize}
\item[(\ttt{HK$_0$})] the subalgebra of $K[X,Y,Z]$ generated by the polynomials  $X^6$, $X^5Y$, $Y^5Z$, $XZ^5$,  $Y^6+ Y^3Z^3$. It is taken from Han and Kwak \cite{HanKwa} where it serves as a simple counterexample to the Eisenbud--Goto conjecture. Order is degrevlex. 

\item[(\ttt{HK$_2$})] The same, but over a field of characteristic $2$.

\item[(\ttt{Pow$_l$})] The subalgebra of $K[X,Y,Z]$ generated by the polynomials $X^6+Y^6+Z^6$, $X^7+Y^7+Z^7$, $X^8+Y^8+Z^8$. The monomial order is lex.

\item[(\ttt{Pow$_r$})] The same as (\ttt{Pow$_l$}), but order degrevlex.

\item[(\ttt{2x2$_0$})] The subalgebra of $K[X_{ij}: i,j = 1,\dots,4]$ generated by the $2$-minors. The monomial order is diagonal.

\item[(\ttt{2x2$_2$})] The same, but over a field of characteristic $2$.

\item[(\ttt{3x6})] $G(3,6)$, a nondiagonal lex order.

\item[(\ttt{3x7})]  $G(3,7)$, a nondiagonal lex order.

\item[(\ttt{3x8})] $G(3,8)$, a nondiagonal lex order.

\item[(\ttt{3x9$_l$})] $G(3,9)$, a nondiagonal lex order.

\item[(\ttt{3x9$_r$})] $G(3,9)$, a nondiagonal degrevlex order.
\end{itemize}

Table \ref{Bench_1} contains data of the examples that are relevant for the \Sagbi {} computation.
\begin{table}[hbt]
\begin{tabular}{crrrrrrrr}
\midrule[1.2pt]
\strut        & \multicolumn{3}{c}{norm deg}& & \multicolumn{3}{c}{times in minutes}\\
\cline{2-4}\cline{6-9}
\strut ecample& bound& \Sagbi {} & (Deg) & \#\Sagbi {} & (Deg) & (Hilb) & CoCoA5 & M2 \\
\midrule[1.2pt]
\strut (\ttt{HK$_0$}) & 16 & ---&  ---& 80 &1:13.67 & 1:19.56 & 3:29.00 & T\\
\hline
\strut (\ttt{HK$_2$})& 16 & ---&  ---& 16 &0:00.91 & 0:00.91  & 0:00.54 & 0:02.89 \\
\hline
\strut (\ttt{Pow$_l$}) & 200& ---& ---& 28 & 0:26.16 & 0:20.87   & 21:27.51 & ---\\
\hline
\strut (\ttt{Pow$_r$)}  & 200& --- &---& 46 & 1:24.92 & 1:10.98 & 78:58.44 & T\\
\hline
\strut (\ttt{2x2$_0$}) & 10&    3 & 7 & 89 &0:35:78 & 0:9.72 & 56:17.28   & ---\\
\hline
\strut (\ttt{2x2$_2$}) & 15&    6 & 13 &130& 2:20.30 & O      & T   & ---\\
\hline
(\strut \ttt{3x6})     & 10&    2 & 4 & 21 &0:00.45 & 0:00.41 &  0:0 .40  & 0:01.10\\
\hline
\strut (\ttt{3x7})     & 10&    2 & 4 & 37 &0:01.48 & 0:00.82 &   0:08.95 & 1:44.20\\ 
\hline
\strut (\ttt{3x8})     & 10&    3 & 6 & 67 &0:08.15 & 0:03.07 &   0:44.84 & T\\
\hline
\strut (\ttt{3x9$_l$}) & 10&    3 & 6 &101 &0:37.27 & 0:12.94 &  14:36.46 & ---\\
\hline
\strut (\ttt{3x9$_r$}) & 10&    7 & 8 & 90 &\ M & \ \ 0:22:69 &  ---       & ---\\
\hline
\end{tabular}
\vspace*{1ex}
\caption{sagbiNormaliz.lib vs. CoCoA5 and M2}	
\label{Bench_1}
\end{table}
In the table the first column after the name of the example is the degree bound for all four computations, (Deg), (Hilb), CoCoA5 and M2.The next column gives the maximum degree of the \Sagbi {} basis, provided we could compute a complete \Sagbi {} basis. The third column lists to what degree the computation had to be run for (Deg) to finish in these cases. Then we find the cardinality of the (partial) \Sagbi {} basis, followed by the computation times. Some computations failed because of error conditions or the excess of the time bound:
\begin{itemize}
\item[T] Time bound of 90 minutes exceeded. Because the time bound was exceeded for (\ttt{3x8}) by Macaulay2, we did not try larger formats.
\item[M] Lack of memory (max 32 GB), see Remark \ref{OnBench_2}(d).
\item[O] An intermediate Hilbert series could not be transferred from Normaliz to Singular because of Singular's bound of 32 bit for the type \ttt{int}.
\end{itemize}

The times are given in the format min:sec with two decimals for the seconds.  The times have been taken on a Dell xps17 laptop with an Intel  i7-11800H at 2.3 GHz, on a Dell server r6525 for Macaulay 2, and  for CoCoA5, on a MacBook Pro with an   Intel Quad-Core  i7  at 2,3 GHz.  For comparison, the r6525 have been multiplied by $0.5$, and the Macbook times have been multiplied by the factor 0.73 measured by running (\ttt{3x7}) with Macaulay2 on the three machines.

\begin{remark}\label{OnBench_1}
We add comments on specific examples.

(a) The bulk of the computation time for (\ttt{HK$_0$}) goes into the \tat{} computation which reaches rather high degrees. In contrast, the polynomials are very sparse so that the subductions are fast. This explains why (Deg) is faster than (Hilb). We have added the characteristic $2$ case since it shows that the \Sagbi {} basis depends significantly on the base field. In this case it shrinks from characteristic $0$ to characteristic $2$ (and $3$), but the opposite can happen as well.

We expect that (\ttt{HK$_0$}) and (\ttt{HK$_2$}) do not have finite \Sagbi {} bases.

(b) In contrast to (\ttt{HK$_0$}),  (\ttt{Pow$_r$}) uses more time for the subduction by rather dense polynomials. We expect that the \Sagbi {} bases are infinite.

(c) We have mentioned the $2\times 2$ minors of a $4\times 4$ matrix of indeterminates already in Remark \ref{goodies}(a). That the defining ideal was not computable for us about 10 years ago, while the \Sagbi {} basis takes only 25 sec with (Deg) and only 10 sec with (Hilb) is remarkable.

In characteristic $2$ the algebraic structure is very different from that in characteristic $0$, and this is also visible in the \Sagbi {} basis that becomes considerably larger. Since (Deg) went through we could compute the Hilbert series. But then (Hilb) failed since an intermediate Hilbert series could not be transferred because of an overflow in Singular. 

(d) The evaluation of the \tat{} on the partial \Sagbi {} basis computed can of course fail for lack of memory, as we see in (Deg) for (\ttt{3x9$_r$}). For (Deg) it is impossible to know that the \Sagbi {} basis is already complete (as is clear from (Hilb)), and degree $8$ power products of $3$-minors can already be very long polynomials. Even if the memory of a larger machine could suffice, computation time can set a limit at this point.

(e) (\ttt{HK$_0$}) has been run with sorting, all the others without. It is the only case in which we have seen a significant difference in running time. Without sorting the computation times grow by about 1 min, for (Deg) as well as for (Hilb), and the terminal output reveals that the time difference stems form the \tat{} computation.

(f) The computation of the Hilbert series of $G(3,9)$ via a \Sagbi {} basis with respect to a diagonal monomial order (not in Table \ref{Bench_1}), order takes only 12 sec.
\end{remark}

We add some computations with (Gen) to get a comparison to the Singular library sagbi.lib. For it the number of rounds of (\Sagbi {}) can be limited. We have confined ourselves to the algebras related by minors since the degrees of the \tat{} for (\ttt{HK}) and (\ttt{HK}) become extremely large after one or two rounds, and (Gen) must evaluate them fully. We have added (\ttt{3x9$_d$}), $G(3,9)$ with a diagonal monomial order.
\begin{table}[hbt]
\begin{tabular}{crrrrr}
\midrule[1.2pt]
\strut        &          \multicolumn{2}{c}{rounds}  &    \\
\cline{2-3}
\strut example     & bound& \Sagbi {} & \#\Sagbi {} & (Gen) & Singular \\
\midrule[1.2pt]
\strut (\ttt{2x2$_0$}) & 10&    3 & 89 &1:18.89 & T\\
\hline
\strut (\ttt{3x6})     & 10&    2 & 21 &0:00.58 &0:00.22\\
\hline
\strut (\ttt{3x7})     & 10&    2 & 37 &0:01.87 & F \\ 
\hline
\strut (\ttt{3x8})     & 10&    3 & 65 &0:16.47 & ---\\
\hline
\strut (\ttt{3x9$_l$}) & 10&    3 &101 &1:45.72 & ---\\
\hline
\strut (\ttt{3x9$_d$}) & 10&    1 &84 &0:11.31 & T\\
\hline
\end{tabular}
\vspace*{1ex}
\caption{sagbiNormaliz.lib vs. Singular sagbi.lib}	
\label{Bench_2}
\end{table}
In Table \ref{Bench_2} T indicates that the computation was stopped after $1$ hour without output, and F indicates a failure because of a segmentation fault in Singular.

\begin{remark}\label{OnBench_2}
As documented in Table \ref{Bench_2}, sagbi.lib computes (\ttt{3x6}).  It failed for the others. In view of the failure for (\ttt{3x7}) after $> 40$ min we did not try (\ttt{3x8}) or (\ttt{3x9$_l$}). It is certainly surprising that sagbi.lib cannot recognize that the $84$ minors form already a \Sagbi {} basis for (\ttt{3x9$_d$}). 
\end{remark}

\end{document}